\title{Componentwise Linearity Under Square-Free Gr\"obner Degenerations}
\author{Hongmiao Yu}
\email{yu@dima.unige.it}
\address{Dipartimento di Matematica, Universit\'a di Genova, Italy}
 \thanks{The author is supported by PRIN  2020355BBY ``Squarefree Gr\"obner degenerations, special varieties and related topics".} 
  \date{}
\newtheoremstyle{break}%
{}{}%
{\slshape}{}%
{\bfseries}{.}%
{5pt}{}
\theoremstyle{definition}
\newtheorem{definition}{Definition}[section]
\newtheorem{example}[definition]{Example}
\newtheorem{notation}[definition]{Notation}
\newtheorem{remark}[definition]{Remark}
\theoremstyle{break}
\newtheorem{lemma}[definition]{Lemma}
\newtheorem{proposition}[definition]{Proposition}
\newtheorem{theorem}[definition]{Theorem}
\newtheorem{corollary}[definition]{Corollary}
\newtheorem*{maintheorem*}{Main\ Theorem(cf. Theorem\til\ref{main})}
\newtheorem*{theorem*}{Theorem}
\newtheorem*{theorem3.5*}{Theorem(cf. Theorem\til\ref{thm35})}
\newtheorem*{corollarymain*}{Corollary\til\ref{main}}
\newtheorem*{question*}{Question}
\def\til{~}
\newcommand\tbs[1]{\textsl{\textit{#1}}}  
\def\N{\mathbb N}
\def\Z{\mathbb Z}
\def\mm{\mathfrak m}  
\def\H{H} 
\def\la{\longrightarrow}
\def\:{\colon}
\def\pd{\mathrm{projdim}}  
\def\reg{\mathrm{reg}}
\def\Ext{\mathrm{Ext}}
\def\se{\subseteq}
\def\iso{\cong}
\def\init{\mathrm{in}}
\def\HF{\mathrm {HF}}
\def\HS{\mathrm {HS}}
\def\cocoa{\mbox{\rm 
   C\kern-.13em o\kern-.07 em C\kern-.13em o\kern-.15em A}}
\newcommand\fg[0]{finitely generated{}}
\begin{document}
\begin{abstract}
Using the recent results on square-free Gröbner degenerations by Conca and Varbaro, we proved that if a homogeneous ideal $I$ of a polynomial ring is such that its initial ideal $\init_<(I)$ is square-free and $\beta_0(I)=\beta_0(\init_<(I))$, then $I$ is a componentwise linear ideal if and only if  $\init_<(I)$ is a componentwise linear ideal. In particular,  if furthermore one of $I$ and $\init_<(I)$ is  componentwise linear, then their graded Betti numbers coincide.

\end{abstract}

\maketitle

\section{Introduction}
Throughout this paper,  $R=K[X_1,\dots, X_n]$ is the polynomial ring in $n$ variables over a field $K$ with  $\deg (X_i) = 1$ for each $i=1,\dots, n$ and $\mm=(X_1,\dots, X_n)$  is the unique homogeneous maximal ideal of $R$. 
Let $I$ be a homogeneous ideal of $R$. We denote by $\beta_{i,j}(I)$ the $(i,j)$-th graded Betti number of $I$ and, for each $d\in\Z_+$, we denote by $I_{<d>}$ the ideal generated by all homogeneous polynomials of degree $d$ belonging to $I$.

The notion of componentwise linearity was introduced by Herzog and Hibi  \cite{HH} in 1999:
We say that a homogeneous ideal $I\se R$ has a \tbs{$d$-linear resolution} if $\beta_{i,i+j}(I) = 0$ for all $i$ and for all $j\not=d$.
We say that $I$ is \tbs{componentwise linear} if $I_{<d>}$ has a $d$-linear resolution for all $d\in\Z$.
In particular, if  $I$ has a linear resolution, then it is componentwise linear.

In their paper published in 2020 on square-free Gr\"obner degenerations \cite{CV}, Conca and Varbaro showed that if $I$ is a homogeneous ideal of a polynomial ring and if the initial ideal $\init_<(I)$ is square-free with respect to some term order $<$, then the Castelnuovo-Mumford regularity of $I$ and of $\init_<(I))$ coincide \cite[Corollary 2.7]{CV}. A consequence of this result is the following: if $\init_<(I)$ is square-free, then $I$ has a $d$-linear resolution if and only if $\init_<(I)$ has a $d$-linear resolution. On the other hand, since $\init_<(I)$ and $I$ have the same Hilbert function, if both $I$ and $\init_<(I)$ have  $d$-linear resolutions, using their  Hilbert series:
\begin{eqnarray*}
\frac{\sum_{i=0}^{\pd(I)}(-1)^i\sum_{j\in\Z}\beta_{i,j}(I)t^j}{(1-t)^n}&=&\HS_I(t)\\
&=&\HS_{\init_<(I)}(t)\\
&=&\frac{\sum_{i=0}^{\pd({\init_<(I)})}(-1)^i\sum_{j\in\Z}\beta_{i,j}({\init_<(I)})t^j}{(1-t)^n}
\end{eqnarray*}
we have that the graded Betti numbers $\beta_{i.j}(I)=\beta_{i.j}(\init_<(I))$ for all $i,j\in\Z$.
Therefore, if $\init_<(I)$ is square-free and $I$ has a linear resolution, then  their graded Betti numbers coincide.

In what follows we suppose furthermore that  $<$ is a graded term order. 
Since componentwise linear ideals can be considered as a generalization of ideals with linear resolution, naturally one has some questions: if one of the ideals $I$ and $\init_<(I)$ is a componentwise linear ideal, can we obtain that, under some certain assumptions,  the other one is also componentwise linear? Can we have some information about their graded Betti numbers? One part of these questions has already been answered by Caviglia and Varbaro in \cite{CaV}. They proved that if $\init_<(I)$ is a componentwise linear ideal and if $\beta_0(I)=\beta_0(\init_<(I))$, then $I$ is also a componentwise linear ideal \cite[Theorem 5.4]{CaV}. In this paper, we show that if $\init_<(I)$ is square-free, then the converse of the result of Caviglia and Varbaro also holds, that is, 
\begin{quote}
Assume that $\init_<(I)$ is square-free and $\beta_0(I)=\beta_0(\init_<(I))$. Then $I$ is a componentwise linear ideal if and only if  $\init_<(I)$ is a componentwise linear ideal. In particular,  if furthermore one of $I$ and $\init_<(I)$ is  componentwise linear, we have $\beta_{i,i+j}(\init_<(I))=\beta_{i,i+j}(I)$ for all $i,j$.
\end{quote}

\bigskip

{\bf Acknowledgments}: The author wishes to thank Matteo Varbaro for very helpful discussions related to this paper and Alessio D'Al\`i for his useful suggestions and technical assistance.


\section{The Main Result}


 \begin{notation}
For each $d\in\Z_+$, we denote by $I_{\le d}$ the ideal generated by all homogeneous polynomials of  $I$ whose degree is less than or equal to $d$.
\end{notation}

\begin{lemma}\label{l2}
Let  $d\in\Z_+$. Then following conditions are equivalent: 
\begin{enumerate}
\item[$i$)] $\beta_0(I_{<d>})=\beta_0(\init_<(I_{<d>}))$,
\item[$ii$)]$\init_<(I_{<d>})=\init_<(I)_{<d>}$,
\item[$iii$)]$\init_<(I_{\le d})=\init_<(I)_{\le d}$.
\end{enumerate}
\end{lemma}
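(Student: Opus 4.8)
The plan is to prove the cyclic chain of implications $i)\Rightarrow ii)\Rightarrow iii)\Rightarrow i)$, keeping in mind that $<$ is a graded term order, so that $\init_<$ interacts well with the grading: for any homogeneous ideal $J$ and any degree $e$, the vector space $\init_<(J)_e$ is the span of the initial terms of $J_e$, hence $\dim_K J_e = \dim_K \init_<(J)_e$. Throughout I will use freely that for ideals $J\subseteq J'$ one has $\init_<(J)\subseteq\init_<(J')$, and that $\beta_0$ counts minimal generators, which for a graded object is $\sum_e \dim_K (J/\mm J)_e$.

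\textbf{$ii)\Rightarrow iii)$ and $iii)\Rightarrow i)$ (the easier directions).}
For $ii)\Rightarrow iii)$: I would argue that $\init_<(I_{\le d})$ and $\init_<(I)_{\le d}$ agree by splitting into degrees $e\le d$ and checking that in each such degree the two monomial ideals have the same graded piece. Since $I_{\le d}$ and $I_{<d>}$ agree in degrees $\le d$ after saturating by lower-degree generators — more precisely, $(I_{\le d})_e = I_e$ for $e\le d$ and similarly $(I_{<d>})_d = I_d$ — the hypothesis $ii)$ forces the initial ideals to match in every degree $\le d$, and both $\init_<(I_{\le d})$ and $\init_<(I)_{\le d}$ are generated in degrees $\le d$, giving equality. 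For $iii)\Rightarrow i)$: from $iii)$, comparing generators in degree exactly $d$, one gets that the degree-$d$ part of $\init_<(I)$ already lies in $\init_<(I_{<d>})$; combined with the automatic inclusion $\init_<(I_{<d>})\subseteq \init_<(I)_{\le d} = \init_<(I_{\le d})$, this pins down $\init_<(I_{<d>})$ exactly as $\init_<(I)_{<d>}$, and then $\beta_0(\init_<(I_{<d>}))$ equals $\beta_0$ of a monomial ideal generated in a single degree $d$, namely $\dim_K \init_<(I)_d = \dim_K I_d = \beta_0((I)_{<d>})$ — wait, one must be careful that $\beta_0(I_{<d>})$ is not necessarily $\dim_K I_d$, so here I would instead use that $I_{<d>}$ is generated in degree $d$ (by definition), hence $\beta_0(I_{<d>}) = \dim_K (I_{<d>})_d = \dim_K I_d$, and likewise $\init_<(I_{<d>})$ is generated in degree $d$ by the preceding identification, so its $\beta_0$ is the same dimension count.

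\textbf{$i)\Rightarrow ii)$ (the main obstacle).}
This is where the content sits. We always have the inclusion $\init_<(I_{<d>}) \subseteq \init_<(I)_{<d>}$, because $I_{<d>}\subseteq I$ and both sides are generated in degree $d$: indeed $\init_<(I)_{<d>}$ is generated by the degree-$d$ monomials of $\init_<(I)$, which are exactly the leading terms of $I_d = (I_{<d>})_d$, so $\init_<(I)_{<d>} = (\init_<(I)_{<d>})$ is spanned in degree $d$ by $\init_<((I_{<d>})_d) \subseteq \init_<(I_{<d>})$. Hence equality of the two ideals is equivalent to equality of their numbers of minimal generators, and the subtlety is that a priori $\init_<(I_{<d>})$ could have minimal generators in degree $>d$ — it is a monomial ideal but need not be generated in degree $d$, even though $I_{<d>}$ is. The hypothesis $i)$, $\beta_0(I_{<d>}) = \beta_0(\init_<(I_{<d>}))$, says the number of minimal generators does not jump; since $\beta_0(I_{<d>})$ counts generators all in degree $d$ (there are $\dim_K I_d - \dim_K (\mm I_{<d-1>}\cap\text{stuff})$ of them — more carefully, $\beta_0(I_{<d>})$ is the minimal number of degree-$d$ generators, which I would just denote $g$), while $\beta_0(\init_<(I_{<d>})) \ge$ (number of minimal generators of $\init_<(I_{<d>})$ in degree $d$) $= g$ with equality iff there are no minimal generators in higher degree. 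So $i)$ forces $\init_<(I_{<d>})$ to be generated in degree $d$, and a monomial ideal generated in degree $d$ whose degree-$d$ part equals that of $\init_<(I)_{<d>}$ must coincide with $\init_<(I)_{<d>}$. I expect the main obstacle is precisely making rigorous the claim that, for a graded term order, $\beta_0$ of the initial ideal is at least $\beta_0$ of the ideal with equality controlled degree-by-degree — this should follow from the upper-semicontinuity of Betti numbers under Gröbner degeneration (flat family over $\mathbb{A}^1$), or can be seen directly by a Macaulay-type argument comparing minimal generating sets; I would cite or reprove the inequality $\beta_{0,j}(\init_<(J)) \ge \beta_{0,j}(J)$ for all $j$ and use that summing over $j$ gives $\beta_0(\init_<(J))\ge\beta_0(J)$, with the hypothesis pinning down equality in every degree, in particular forcing no spurious generators above degree $d$.
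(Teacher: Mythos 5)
Your overall strategy --- the cycle $i)\Rightarrow ii)\Rightarrow iii)\Rightarrow i)$, with $i)\Rightarrow ii)$ proved by showing the hypothesis forces $\init_<(I_{<d>})$ to be generated in degree $d$ and with $\beta_0$ then read off the Hilbert function --- is essentially the paper's. But there is one wrong claim and one genuine gap. The wrong claim: in $i)\Rightarrow ii)$ you assert that ``we always have $\init_<(I_{<d>})\subseteq\init_<(I)_{<d>}$ because \dots both sides are generated in degree $d$.'' The automatic inclusion goes the other way, $\init_<(I)_{<d>}\subseteq\init_<(I_{<d>})$ (which is what your own justification actually proves), and $\init_<(I_{<d>})$ need \emph{not} be generated in degree $d$ --- as you yourself point out two sentences later; if your claimed inclusion were automatic, $ii)$ would hold unconditionally and the lemma would be vacuous. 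Fortunately your real argument (hypothesis $i)$ forces $\init_<(I_{<d>})$ to have no minimal generators above degree $d$, hence it equals the ideal generated by its degree-$d$ piece, which is $[\init_<(I)]_d$) does not depend on that sentence and is sound; it is a slightly more hands-on version of the paper's appeal to the fact that $\beta_0(J)=\beta_0(\init_<(J))$ yields a Gr\"obner basis that is a minimal generating set.

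The genuine gap is in $ii)\Rightarrow iii)$. Agreement of the two ideals in each degree $e\le d$ is automatic and uses nothing: $(I_{\le d})_e=I_e$ for $e\le d$, so $\init_<(I_{\le d})$, $\init_<(I)$ and $\init_<(I)_{\le d}$ all have the same graded pieces there. The entire content of the implication lies in degrees $a>d$, where $\init_<(I_{\le d})$ could a priori have minimal generators outside $\init_<(I)_{\le d}$; your assertion that ``$\init_<(I_{\le d})$ is generated in degrees $\le d$'' is exactly what must be proved and is not automatic --- it is the same subtlety you correctly flagged for $I_{<d>}$, and it is also where hypothesis $ii)$ must actually enter (in your sketch it does no work). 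The paper closes this by noting that $\mm I_{<i>}\subseteq I_{<i+1>}$ gives $(I_{<1>})_a\subseteq\cdots\subseteq(I_{<d>})_a$ for $a>d$, so a homogeneous $f\in I_{\le d}$ of degree $a>d$ already lies in $I_{<d>}$; hence its leading term lies in $\init_<(I_{<d>})=\init_<(I)_{<d>}\subseteq\init_<(I)_{\le d}$ by $ii)$. Without some such degree-$>d$ argument the implication is not established.
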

\begin{proof}\begin{enumerate}
\item[$i\Rightarrow ii$)]
We always have $\init_<(I)_{<d>}\se\init_<(I_{<d>})$.  If $\beta_0(I_{<d>})=\beta_0(\init_<(I_{<d>}))$, then 
$I_{<d>}$ has a Gr\"obner basis (with respect to $<$) which is a minimal system of generators of $I_{<d>}$. This implies that $\init_<(I_{<d>})$ is generated by monomials of degree $d$. Hence $\init_<(I_{<d>})=\init_<(I_{<d>})_{<d>}\se \init_<(I)_{<d>}\se\init_<(I_{<d>})$, that is, $\init_<(I_{<d>})=\init_<(I)_{<d>}$.
\item[$ii\Rightarrow i$)]
Since $\init_<(I_{<d>})=\init_<(I)_{<d>}$, $\init_<(I_{<d>})$ is generated by monomials of degree $d$ and so $\init_<(I_{<d>})=([\init_<(I_{<d>})]_{d})$.
Hence, using the fact that $I_{<d>}$ and $\init_<(I_{<d>})$ have the same Hilbert function, we obtain
\begin{eqnarray*}
\beta_0(\init_<(I_{<d>}))&=&\dim_K([\init_<(I_{<d>})]_{d}) \\
&=&\HF_{\init_<(I_{<d>})}(d)\\
&=&\HF_{I_{<d>}}(d)\\
&=&\dim_K([I_{<d>}]_{d})\\
&=&\beta_0(I_{<d>}).
\end{eqnarray*}
\item[$ii\Rightarrow iii$)]
One inclusion $\init_<(I)_{\le d}\se \init_<(I_{\le d})$ always holds. Now we assume that $\init_<(I_{<d>})=\init_<(I)_{<d>}$ and we prove $\init_<(I_{\le d})\se\init_<(I)_{\le d}$. If $m\in \init_<(I_{\le d})$ is a monomial such that $\deg(m)=a$, then there is $f\in I_{\le d}$ such that $\init_<(f)=m$ and $\deg(f)=a$.  Since $I_{\le d}$ is generated by homogeneous polynomials,  we may assume that $f$ is homogeneous. If $a\le d$, then  $m\in \init_<(I)_{\le d}$ since $m\in  \init_<(I)$ and $\deg(m)\le d$. Now we assume that $a>d$. 
Since  $\mm I_{<i>}\se I_{<i+1>}$ for each $i$, we have 
\begin{eqnarray*}
(I_{<1>})_a\se(I_{<2>})_a\se \dots\se (I_{<d>})_a.
\end{eqnarray*}
Hence $f\in I_{<d>}$ and so $m\in\init_<(I_{<d>}) =\init_<(I) _{<d>}\se \init_<(I)_{\le d}$.
\item[$iii\Rightarrow ii$)]
Since $<$ is graded, $\init_<(I_{<d>})$ is generated by monomials of degree greater  than or equal to $d$. By our assumption $\init_<(I_{<d>})\se\init_<(I_{\le d})=\init_<(I)_{\le d}$. It follows that $\init_<(I_{<d>})$ is generated by monomials of degree $d$ and so $\init_<(I_{<d>})=\init_<(I_{<d>})_{<d>}\se \init_<(I)_{<d>}$.
\end{enumerate}
\end{proof}

\begin{lemma} \label{l3}
Let $J$ be an ideal of $R$ generated by homogeneous polynomials of the same degree $a$. 
If $\beta_0(J)=\beta_0(\init_<(J))$, 
then for each $d \in\Z_+$ we have
\begin{enumerate}
\item[$i$)] $\init_<(\mm^dJ)=\mm^d\init_<(J)$, and
\item[$ii$)] $\beta_0(\mm^dJ)=\beta_0(\init_<(\mm^dJ))$.
\end{enumerate}
\end{lemma}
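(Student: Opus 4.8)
The plan is to first use Lemma~\ref{l2} to turn the hypothesis into a structural statement about $\init_<(J)$, then prove~(i) by comparing Hilbert functions, and finally deduce~(ii) from~(i) and Lemma~\ref{l2} again. Since $J$ is generated in degree $a$ we have $J = J_{<a>}$, so the assumption $\beta_0(J) = \beta_0(\init_<(J))$ is exactly condition~(i) of Lemma~\ref{l2} applied with the ideal $J$ and the integer $a$; hence condition~(ii) holds, i.e. $\init_<(J) = \init_<(J)_{<a>}$ is generated by monomials of degree $a$. Equivalently, if $g_1, \dots, g_r$ is a minimal system of homogeneous generators of $J$, all of degree $a$, then $\init_<(g_1), \dots, \init_<(g_r)$ minimally generate $\init_<(J)$.

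For~(i), the inclusion $\mm^d \init_<(J) \subseteq \init_<(\mm^d J)$ is immediate, as $\mm^d \init_<(J)$ is generated by the monomials $u\,\init_<(g_i) = \init_<(u g_i)$ (with $u$ a monomial of degree $d$), each of which lies in $\init_<(\mm^d J)$. For the reverse inclusion I would argue via Hilbert functions. A homogeneous element of $\mm^d J$ is a sum of products $vh$ with $\deg v \ge d$ and $h \in J$ homogeneous of degree $\ge a$, so $[\mm^d J]_e = 0$ for $e < a+d$; the same holds for $\mm^d \init_<(J)$ since $\init_<(J)$ is generated in degree $a$. For $e \ge a+d$ one checks $[\mm^d J]_e = [R]_{e-a}[J]_a = [J]_e$ and likewise $[\mm^d \init_<(J)]_e = [\init_<(J)]_e$, again using that both $J$ and $\init_<(J)$ are generated in degree $a$. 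Since $J$ and $\init_<(J)$ share the same Hilbert function, so do $\mm^d J$ and $\mm^d \init_<(J)$; combined with $\mm^d \init_<(J) \subseteq \init_<(\mm^d J)$ and the fact that $\init_<(\mm^d J)$ has the same Hilbert function as $\mm^d J$, the graded module $\init_<(\mm^d J)/\mm^d \init_<(J)$ has identically zero Hilbert function and hence vanishes, proving~(i).

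For~(ii), by~(i) the ideal $\init_<(\mm^d J) = \mm^d \init_<(J)$ is generated in degree $a+d$, and $\mm^d J$ is visibly generated in degree $a+d$ as well. I would then apply Lemma~\ref{l2} to the ideal $\mm^d J$ and the integer $a+d$: since $\mm^d J = (\mm^d J)_{<a+d>}$ and $\init_<(\mm^d J) = \init_<(\mm^d J)_{<a+d>}$, condition~(ii) of Lemma~\ref{l2} is satisfied, so condition~(i) gives $\beta_0(\mm^d J) = \beta_0(\init_<(\mm^d J))$. (Alternatively: for an ideal generated in a single degree the minimal number of generators equals the dimension of that graded component, and $[\mm^d J]_{a+d}$ and $[\init_<(\mm^d J)]_{a+d}$ have equal dimension by the computation above.) The one point requiring care is the reverse inclusion in~(i): a priori $\mm^d J$ could acquire initial monomials outside $\mm^d \init_<(J)$, and what prevents this is exactly that the hypothesis forces $\init_<(J)$ to be generated in the single degree $a$, which makes the degreewise Hilbert-function count close up; everything else is routine.
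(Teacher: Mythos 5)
Your proof is correct, and it reaches part $i$) by a genuinely different mechanism than the paper. The paper argues directly on initial terms: the hypothesis gives a Gr\"obner basis $h_1,\dots,h_r$ of $J$ that is also a minimal generating set in degree $a$, so any monomial $m\in\init_<(\mm^dJ)$ factors as $\mu\,\init_<(h_i)$ with, by the degree count $\deg(m)\ge a+d$, $\deg(\mu)\ge d$; hence $m\in\mm^d\init_<(J)$. You instead prove the reverse inclusion by a Hilbert function count, showing $[\mm^dJ]_e=[J]_e$ and $[\mm^d\init_<(J)]_e=[\init_<(J)]_e$ for $e\ge a+d$ (and both vanish below), so that $\init_<(\mm^dJ)/\mm^d\init_<(J)$ has zero Hilbert function. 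Both arguments hinge on the same key input --- that the hypothesis forces $\init_<(J)$ to be generated in the single degree $a$ --- and for part $ii$) your dimension count in degree $a+d$ (whether routed through Lemma~\ref{l2} or done directly) coincides with the paper's computation. The paper's version of $i$) is slightly more economical since it needs no Hilbert function bookkeeping, while yours makes the ``nothing new can appear'' phenomenon quantitatively transparent. One small imprecision: it is not true that for an \emph{arbitrary} minimal generating system $g_1,\dots,g_r$ of $J$ the initial terms $\init_<(g_i)$ generate $\init_<(J)$ (distinct $g_i$ may share an initial term); the hypothesis only guarantees that \emph{some} such system exists. This is harmless where you use it, since the easy inclusion $\mm^d\init_<(J)\se\init_<(\mm^dJ)$ holds unconditionally: any monomial of $\init_<(J)$ is $\init_<(f)$ for some $f\in J$, and $u\,\init_<(f)=\init_<(uf)\in\init_<(\mm^dJ)$ for any monomial $u$ of degree $d$.
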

\begin{proof}
Notice that the assumption $\beta_0(J)=\beta_0(\init_<(J))$ implies that $J$ has a Gr\"obner basis (with respect to $<$) which is a minimal system of generators of $J$, we denote this system by $\{h_1,\dots, h_r\}$. So $\init_<(J)=(\init_<(h_1),\dots, \init_<(h_r))$ and $\deg(h_i)=a$ for all $i$. 
\begin{enumerate}
\item[$i$)] 
It is clear that $\mm^d\init_<(J)=\init_<(\mm^d)\init_<(J)\se \init_<(\mm^dJ).$ We show that $\init_<(\mm^dJ)\se\mm^d\init_<(J)$. If $m\in\init_<(\mm^dJ)$ is a monomial, then  there exists $f\in \mm^dJ$ such that $m=\init_<(f)$. 
Since $f \in \mm^dJ\se J$, $m=\init_<(f)\in (\init_<(h_1),\dots, \init_<(h_r))$. This implies that there exists a monomial $\mu\in R$ and there exists $i\in\{1, \dots, r\}$ such that $m=\mu\init_<(h_i)$. Since  $<$ is graded, $\deg(m)\ge d+a$, and so $\deg(\mu)\ge d$.  It follows that $m=\mu\init_<(h_i)\in\mm^d\init_<(J).$ 
\item[$ii$)] 
Since $\init_<(\mm^dJ)=\mm^d\init_<(J)=\mm^d(\init_<(h_1),\dots, \init_<(h_r))$  and $\mm^dJ$ is generated by monomials of degree $d+a$, we have $\init_<(\mm^dJ)=([\mm^d\init_<(J)]_{d+a})$.  Similarly, since $\mm^dJ$  is generated by homogeneous polynomials of degree $d+a$, we have $\mm^dJ=([\mm^dJ]_{d+a})$. Therefore,
\begin{eqnarray*}
\beta_0(\init_<(\mm^dJ))&=&\dim_K([\mm^d\init_<(J)]_{d+a}) \\
&=&\dim_K([\init_<(J)]_{d+a})\\
&=&\HF_{\init_<(J)}(d+a)\\
&=&\HF_{J}(d+a)\\
&=&\dim_K(J_{d+a})\\
&=&\dim_K([\mm^dJ]_{d+a})=\beta_0(\mm^dJ).
\end{eqnarray*}
\end{enumerate}
\end{proof}


\begin{theorem}\label{t1}
If $I$ is a componentwise linear ideal, $\init_<(I)$ is a square-free ideal and $\init_<(I_{<d>})=\init_<(I)_{<d>}$ for all $d\in\Z_+$, then $\init_<(I)$ is a componentwise linear ideal. Moreover, we have $\beta_{i,i+j}(\init_<(I))=\beta_{i,i+j}(I)$ for all $i,j$.
\end{theorem}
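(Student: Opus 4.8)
The plan is to prove the two assertions of the theorem in turn, the common thread being the following elementary observation: although $\init_<(I)_{<d>}$ need not be square-free, the truncation $\init_<(I)_{\le d}$ always is. Indeed, since $<$ is graded and $\init_<(I)$ is square-free, every monomial of $\init_<(I)$ of degree $\le d$ is a multiple of a minimal monomial generator of $\init_<(I)$ of degree $\le d$; hence $\init_<(I)_{\le d}$ is generated by exactly those minimal generators of $\init_<(I)$ that have degree $\le d$, so it is a square-free monomial ideal generated in degrees $\le d$.

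First I would show that $\init_<(I)$ is componentwise linear. Fix $d\in\Z_+$ (if $I_{\le d}=0$ then $\init_<(I)_{<d>}=0$ and there is nothing to prove). By Lemma~\ref{l2} the hypothesis $\init_<(I_{<d>})=\init_<(I)_{<d>}$ is equivalent to $\init_<(I_{\le d})=\init_<(I)_{\le d}$, so $\init_<(I_{\le d})$ is square-free and Conca--Varbaro \cite[Corollary~2.7]{CV} gives $\reg(I_{\le d})=\reg(\init_<(I)_{\le d})$. On the other hand, since $I$ is componentwise linear so is $I_{\le d}$ --- its degree-$e$ component is $I_{<e>}$ for $e\le d$ and $\mm^{e-d}I_{<d>}$ for $e\ge d$, each of which has a linear resolution --- and a componentwise linear ideal has regularity equal to the top degree of its minimal generators by \cite{HH}; for $I_{\le d}$ that degree is $\le d$. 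Thus, putting $L:=\init_<(I)_{\le d}$, we have $\reg L\le d$ with $L$ generated in degrees $\le d$; consequently $L$ has no minimal generator in degree $>d$, so $L_{<d>}=L_{\ge d}:=\bigoplus_{e\ge d}[L]_e$, and $\reg(L_{<d>})=\reg(L_{\ge d})\le\max\{\reg L,d\}=d$. Since $L_{<d>}$ is generated in degree $d$, it has a $d$-linear resolution; and $L_{<d>}=\init_<(I)_{<d>}$ because $[L]_d=[\init_<(I)]_d$. As $d$ was arbitrary, $\init_<(I)$ is componentwise linear.

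Next I would deduce the equality of graded Betti numbers. Recall from \cite{HH} that for any componentwise linear ideal $L$ and all $i\ge 0$, $d\ge 1$,
\[
\beta_{i,i+d}(L)=\beta_{i,i+d}(L_{<d>})-\beta_{i,i+d}(\mm L_{<d-1>}),
\]
while $\beta_{i,j}(L)=0$ whenever $j\le i$; so, setting $d=j-i$, every graded Betti number of $L$ is recovered from the two families $\beta_{i,i+d}(L_{<d>})$ and $\beta_{i,i+d}(\mm L_{<d-1>})$. Applying this to $I$ and to $\init_<(I)$ (legitimate by the previous step), it suffices to prove, for all $i$ and all $d\ge 1$,
\[
\beta_{i,i+d}(I_{<d>})=\beta_{i,i+d}(\init_<(I)_{<d>})\qquad\text{and}\qquad \beta_{i,i+d}(\mm I_{<d-1>})=\beta_{i,i+d}(\mm\,\init_<(I)_{<d-1>})
\]
(the case $d=1$ being trivial since $\mm I_{<0>}=0=\mm\,\init_<(I)_{<0>}$). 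For the first identity: $\init_<(I)_{<d>}=\init_<(I_{<d>})$, so $I_{<d>}$ and $\init_<(I)_{<d>}$ have the same Hilbert function, and each has a $d$-linear resolution because $I$ and $\init_<(I)$ are componentwise linear; two ideals sharing a Hilbert function and having a $d$-linear resolution have the same graded Betti numbers, which is exactly the Hilbert-series computation recalled in the Introduction. For the second identity: by Lemma~\ref{l2} we get $\beta_0(I_{<d-1>})=\beta_0(\init_<(I_{<d-1>}))$, so Lemma~\ref{l3}$(i)$ yields $\init_<(\mm I_{<d-1>})=\mm\,\init_<(I_{<d-1>})=\mm\,\init_<(I)_{<d-1>}$; hence $\mm I_{<d-1>}$ and $\mm\,\init_<(I)_{<d-1>}$ have the same Hilbert function, and each has a $d$-linear resolution (being $\mm$ times an ideal with a $(d-1)$-linear resolution), so the same argument applies. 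Combining, $\beta_{i,i+j}(I)=\beta_{i,i+j}(\init_<(I))$ for all $i,j$.

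The hard part is the first assertion, and within it the passage from ``$\reg(\init_<(I)_{\le d})\le d$'' back to ``$\init_<(I)_{<d>}$ has a $d$-linear resolution''. This goes through precisely because $\init_<(I)_{\le d}$ is generated in degrees $\le d$, so its degree-$d$ component coincides with its truncation at degree $d$ and therefore inherits regularity $\le d$; and it is exactly at this point that one sees why the object to feed into the Conca--Varbaro regularity theorem must be the truncation $\init_<(I)_{\le d}$ (which is square-free) rather than $\init_<(I)_{<d>}$ itself (which in general is not). The remaining ingredients --- that $I_{\le d}$ is componentwise linear when $I$ is, that a componentwise linear ideal has regularity equal to its top generator degree, that $\reg(M_{\ge d})\le\max\{\reg M,d\}$, and that $\mm$ times an ideal with a $(d-1)$-linear resolution has a $d$-linear resolution --- are routine and should be recorded but present no obstacle.
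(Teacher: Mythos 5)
Your proposal is correct, and for the main assertion it follows a genuinely different and arguably more economical route than the paper. The paper proves that $\init_<(I)$ is componentwise linear by induction on $h=\reg(I)$: the inductive step compares $\init_<(I)_{<h>}$ with $\init_<(I)$ via the module $M_h=\init_<(I)_{<h>}/\mm\init_<(I)_{<h-1>}$ and two long exact sequences of $\Ext$, using the inductive hypothesis to kill the contributions of $\init_<(I)_{\le h-1}$ and $\mm\init_<(I)_{<h-1>}$ in the relevant degrees. You instead work with each $d$ separately and with no induction: the observation that $\init_<(I)_{\le d}$ is generated by the minimal square-free generators of $\init_<(I)$ of degree $\le d$ (hence is itself square-free), combined with Lemma~\ref{l2} ($ii\Leftrightarrow iii$), lets you feed $I_{\le d}$ directly into \cite[Corollary 2.7]{CV} to get $\reg(\init_<(I)_{\le d})=\reg(I_{\le d})\le d$, and then the standard bound $\reg(L_{\ge d})\le\max\{\reg L,d\}$ for the truncation hands you the $d$-linear resolution of $\init_<(I)_{<d>}=(\init_<(I)_{\le d})_{\ge d}$. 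This replaces the paper's homological bookkeeping with two routine regularity facts (regularity of a componentwise linear ideal equals its top generator degree; regularity of truncations), both of which the paper itself invokes in equivalent forms, so your argument is at the same level of rigor while isolating more clearly why the hypothesis $\init_<(I_{<d>})=\init_<(I)_{<d>}$ is what makes Conca--Varbaro applicable. Your treatment of the Betti-number equality (reduce via \cite[Proposition 1.3]{HH} to comparing $I_{<d>}$ with $\init_<(I)_{<d>}$ and $\mm I_{<d-1>}$ with $\mm\init_<(I)_{<d-1>}$, then use equal Hilbert functions plus linearity of both resolutions, with Lemma~\ref{l3} supplying $\init_<(\mm I_{<d-1>})=\mm\init_<(I)_{<d-1>}$) coincides with the paper's.
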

\begin{proof}
We denote by $h=\reg(I)$ the Castelnuovo-Mumford regularity of $I$. 
Since $\init_<(I)$ is a square-free ideal,  we have $\reg(I)=\reg(\init_<(I))$ by \cite[Corollary 2.7]{CV}. It follows that $\beta_{0,i}(\init_<(I))=0$ for all $i>h$. Furthermore, since $I$ is componentwise linear, $\beta_{0,h}(\init_<(I))\ge \beta_{0,h}(I)>0$. Hence $h$ is the highest degree of a generator in a minimal system of generators of $\init_<(I)$. We show that $\init_<(I)$ is componentwise linear by induction on $h$.
\begin{enumerate}
\item[$h=1$:] $\init_<(I)$ has $1$-linear resolution and so it is componentwise linear.
\item[$h>1$:] Since  $\init_<(I_{<d>})=\init_<(I)_{<d>}$ for all $d\in\Z_+$, $\init_<(I_{\le d})=\init_<(I)_{\le d}$ for all $d\in\Z_+$ by Lemma\til\ref{l2} $ii\Rightarrow iii$). For each $d\in\Z_+$, since
\begin{eqnarray*} 
(I_{\le h-1})_{\le d}=
\begin{cases}
I_{\le d}&\text{ if \ } d\le h-1,\\
I_{\le h-1}&\text{  if \ } d>h-1,
\end{cases}
\end{eqnarray*}
we have 
\begin{eqnarray*} \init_<((I_{\le h-1})_{\le d})&=&
\begin{cases}
 \init_<(I_{\le d})&\text{ if \ } d\le h-1,\\
\init_<(I_{\le h-1})&\text{  if \ } d>h-1
\end{cases}\\
&=& 
\begin{cases}
 \init_<(I)_{\le d}&\text{ if \ } d\le h-1,\\
\init_<(I)_{\le h-1}&\text{  if \ } d>h-1\end{cases}\\
&=&(\init_<(I)_{\le h-1})_{\le d}\\
&=& \init_<(I_{\le h-1})_{\le d}.
\end{eqnarray*}
Using again  Lemma\til\ref{l2} $iii\Rightarrow ii$) we have $\init_<((I_{\le h-1})_{<d>})=\init_<(I_{\le h-1})_{<d>}$ for all $d\in\Z_+$.
Moreover, since $I$ is componentwise linear, $I_{\le h-1}$ is componentwise linear and $\reg(I_{\le h-1})=h-1$. Since  $\init_<(I)$ is a square-free ideal,  $\init_<(I_{\le h-1})=\init_<(I)_{\le h-1}$  is also a square-free ideal and  $\reg(\init_<(I_{\le h-1}))=h-1$ by \cite[Corollary 2.7]{CV}.
Hence $\init_<(I_{\le h-1})$ is componentwise linear by inductive hypothesis.  It follows that
$$\init_<(I_{<h-1>})=\init_<((I_{\le h-1})_{<h-1>})=\init_<(I_{\le h-1})_{<h-1>}$$ 
has $(h-1)$-linear resolution and so  $\mm\init_<(I)_{<h-1>}=\mm\init_<(I_{<h-1>})$ has $h$-linear resolution. \\
Now we consider the following two short exact sequences
$$0\la \mm\init_<(I)_{<h-1>}\la \init_<(I)_{<h>} \la M_h\la 0$$
and 
$$0\la\init_<(I)_{\le h-1}\la\init_<(I)\la M_h\la 0,$$
where $M_h=\init_<(I)_{<h>}/\mm\init_<(I)_{<h-1>}$.
For each $i,j$ they yield the following long exact sequences
\begin{eqnarray*}
\dots&\la&\Ext^{i-1}_R(\mm\init_<(I)_{<h-1>}, K)_{i+j}\la \Ext^{i}_R(M_h, K)_{i+j}\la \\
&&\Ext^{i}_R(\init_<(I)_{<h>}, K)_{i+j}\la \Ext^{i}_R(\mm\init_<(I)_{<h-1>}, K)_{i+j}\la\dots
\end{eqnarray*}
and
\begin{eqnarray*}
\dots&\la&\Ext^{i-1}_R(\init_<(I)_{\le h-1}, K)_{i+j}\la \Ext^{i}_R(M_h, K)_{i+j}\la \\
&&\Ext^{i}_R(\init_<(I), K)_{i+j}\la \Ext^{i}_R(\init_<(I)_{\le h-1}, K)_{i+j}\la\dots.
\end{eqnarray*}
Since $\beta_{i-1,i+j}(\mm\init_<(I)_{<h-1>})=\beta_{i,i+j}(\mm\init_<(I)_{<h-1>})=0$ for each $j>h$,  $$\Ext^{i-1}_R(\mm\init_<(I)_{<h-1>}, K)_{i+j}=\Ext^{i}_R(\mm\init_<(I)_{<h-1>}, K)_{i+j}=0$$ for each $j>h$. This implies
$$\Ext^{i}_R(M_h, K)_{i+j}\iso \Ext^{i}_R(\init_<(I)_{<h>}, K)_{i+j}$$ 
for each $i$ and for each $j>h$ by the first long exact sequence.
Since  $\reg(\init_<(I)_{\le h-1})=\reg(\init_<(I_{\le h-1}))=h-1$, we have
\begin{eqnarray*}
\dim_K(\Ext^{i-1}_R(\init_<(I)_{\le h-1}, K)_{i+j})&=&\beta_{i-1,i+j}(\init_<(I_{\le h-1})) =0
\end{eqnarray*}
and
\begin{eqnarray*}
\dim_K(\Ext^{i}_R(\init_<(I)_{\le h-1}, K)_{i+j})&=&\beta_{i,i+j}(\init_<(I_{\le h-1})) =0
\end{eqnarray*}
for all $i$ and for all $j\ge  h$, and so
$$\Ext^{i}_R(M_h, K)_{i+j}\iso\Ext^{i}_R(\init_<(I), K)_{i+j}$$
for all $i$ and for all $j\ge h$ by the second long exact sequence.
Therefore, for all $i$ and for all $j> h$,
$$\Ext^{i}_R(\init_<(I)_{<h>}, K)_{i+j}\iso\Ext^{i}_R(\init_<(I), K)_{i+j}.$$
We have
\begin{eqnarray*}
\beta_{i,i+j}(\init_<(I)_{<h>})&=&\dim_K(\Ext^{i}_R(\init_<(I)_{<h>}, K)_{i+j})\\
&=&\dim_K(\Ext^{i}_R(\init_<(I), K)_{i+j})\\
&=&\beta_{i,i+j}(\init_<(I))\\
&=&0
\end{eqnarray*}
for all $i$ and for all $j> h=\reg(\init_<(I))$.
Since $\init_<(I)_{<h>}$ is generated by generators of degree $h$, $\beta_{0,j}(\init_<(I)_{<h>})=0$ for $j<h$, and so $\beta_{i,i+j}(\init_<(I)_{<h>})=0$ for  $j<h$. 
It follows that  $\beta_{i,i+j}(\init_<(I)_{<h>})=0$ for all $j\not=h$.\\
By inductive hypothesis and by Lemma\til\ref{l2}, $\init_<(I)_{\le h-1}$ is componentwise linear. Hence for each $d\le h-1$ and for each $j\not=d$,
$$\beta_{i,i+j}(\init_<(I)_{<d>})=\beta_{i,i+j}((\init_<(I)_{\le h-1})_{<d>})=0.$$ Therefore, $\beta_{i,i+j}(\init_<(I)_{<d>})=0$ for each $d$ and for each $j\not=d$, that is, $\init_<(I)$ is a componentwise linear ideal.
\end{enumerate}
Furthermore,  
by Lemma\til\ref{l2} $ii\Rightarrow i$) and Lemma\til\ref{l3} $i$)  we get $\init_<(\mm I_{<d>})= \mm\init_<(I_{<d>})=\mm \init_<(I)_{<d>}$ for each $d$. If both $I$ and $\init_<(I)$ are componentwise linear, then both $I_{<d>}$ and $\init_<(I_{<d>})=\init_<(I)_{<d>}$ have $d$-linear resolutions for each $d$. It follows that both  $\mm I_{<d>}$ and $\init_<(\mm I_{<d>})$ have $(d+1)$-linear resolutions for all $d$. Using their Hilbert series we obtain 
$\beta_{i,i+j}(I_{<d>})=\beta_{i,i+j}(\init_<(I_{<d>}))$ and
$\beta_{i,i+j}(\mm I_{<d>})=\beta_{i,i+j}(\init_<(\mm I_{<d>}))$ for all $i,j$.
Therefore, by \cite[Proposition $1.3$]{HH} we have that
\begin{eqnarray*}
\beta_{i,i+j}(\init_<(I))&=&\beta_{i}(\init_<(I)_{<j>})-\beta_{i}(\mm \init_<(I)_{<j-1>})\\
&=&\beta_{i}(\init_<(I_{<j>}))-\beta_{i}(\init_<(\mm I_{<j-1>}))\\
&=&\beta_{i}(I_{<j>})-\beta_{i}(\mm I_{<j-1>})\\
&=&\beta_{i,i+j}(I)
\end{eqnarray*}
for each $i,j$.
\end{proof}

Now we make a short discussion on the necessity of the assumptions of the above theorem.
The following example shows that assumption ``$\init_<(I)$ is a square-free ideal'' is a necessary condition for our result.
\begin{example}
Let $X=\begin{pmatrix}
a & b & c\\
b & d & e\\
c & e &f
\end{pmatrix}$ be a symmetric matrix and  let $<$ be the graded reverse lexicographic order on $K[a,b,c,d,e,f]$ induced by $a>b>c>d>e>f.$   
The ideal $I$ generated by the $2$-minors of $X$ is
\begin{eqnarray*}
I &=&(-b^2+ad, -bc+ae,-cd+be, -c^2+af, -ce+bf, -e^2+df).
\end{eqnarray*}
Using Macaulay2 \cite{M2} we compute  the  Betti table of $R/I$: 
\begin{center}
\begin{tabular}{ |c|c c c c | } 
 \hline
 & 0 & 1 & 2&3 \\ 
 \hline
0 & 1 & 0 & 0 &0  \\ 
1 & 0& 6 & 8&3\\ 
 \hline
\end{tabular}
\end{center}
Notice that $I$ has $2$-linear resolution so it is a componentwise linear ideal.
Using again Macaulay2 \cite{M2} we obtain that the initial ideal of $I$ with respect to $<$
\begin{eqnarray*}
\init_<(I)&=&(e^2,ce, cd,c^2, bc, b^2)
\end{eqnarray*}
is not a square-free ideal. 
And according to the  Betti table of $R/\init_<(I)$:
\begin{center}
\begin{tabular}{ |c|c c c c c| } 
 \hline
 & 0 & 1 & 2&3&4 \\ 
 \hline
0 & 1 & 0 & 0 &0 &0 \\ 
1 & 0& 6 & 8&4&1 \\ 
2 &0 &0&1&1& 0\\ 
 \hline
\end{tabular}
\end{center}
we have that $\init_<(I)$ is not a componentwise linear ideal even if $\beta_0(I)=\beta_0(\init_<(I))=6$.
\end{example}

But the necessity of assumption ``$\init_<(I_{<d>})=\init_<(I)_{<d>}$ for all $d\in\Z_+$'' is still an open question. Using Lemma\til\ref{l2}, this question can be reduced to the following one:
 \begin{question*}
If $I$ is a componentwise linear ideal such that $\init_<(I)$ is square-free, can we obtain that $\init_<(I_{\le {h-1}})$ is also a square-free ideal with $h=\reg(I)$?
\end{question*}
If the answer to the above question is true, then the necessity of this assumption can be denied.


\section{Some Applications}
In this section, we study some applications of  Theorem\til\ref{t1}. First of all, notice that the first part of the proof of \cite[Theorem 5.4]{CaV} showed that
if $\beta_0(I)=\beta_0(\init_<(I))$, then for each $d\in\Z_+$ the initial ideal $\init_<(I_{<d>})$ is generated in degree $d$, and so $\init_<(I_{<d>})=\init_<(I)_{<d>}$ for all $d\in\Z_+$. Therefore, our main theorem has the following consequence:

\begin{corollary}\label{main}
Assume that $\init_<(I)$ is square-free and $\beta_0(I)=\beta_0(\init_<(I))$. Then $I$ is a componentwise linear ideal if and only if  $\init_<(I)$ is a componentwise linear ideal. In particular,  if furthermore one of $I$ and $\init_<(I)$ is  componentwise linear, we have $\beta_{i,i+j}(\init_<(I))=\beta_{i,i+j}(I)$ for all $i,j$.
\end{corollary}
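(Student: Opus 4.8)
The plan is to deduce Corollary~\ref{main} directly from Theorem~\ref{t1} together with the result of Caviglia and Varbaro \cite[Theorem 5.4]{CaV}, using the observation (recorded in the paragraph preceding the corollary) that the hypothesis $\beta_0(I)=\beta_0(\init_<(I))$ already supplies the technical condition $\init_<(I_{<d>})=\init_<(I)_{<d>}$ for every $d\in\Z_+$. So the first step is to verify that claim: from $\beta_0(I)=\beta_0(\init_<(I))$ one knows $I$ admits a Gr\"obner basis that is simultaneously a minimal generating set, hence for each $d$ the degree-$d$ part of that basis generates $I_{<d>}$ and has leading terms of degree $d$; thus $\init_<(I_{<d>})$ is generated in degree $d$, and by Lemma~\ref{l2} ($i\Rightarrow ii$) this forces $\init_<(I_{<d>})=\init_<(I)_{<d>}$ for all $d$.

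Next I would prove the two implications of the biconditional. For the forward direction, assume $I$ is componentwise linear; then all three hypotheses of Theorem~\ref{t1} are in force ($I$ componentwise linear, $\init_<(I)$ square-free, and $\init_<(I_{<d>})=\init_<(I)_{<d>}$ for all $d$ by the previous step), so Theorem~\ref{t1} gives that $\init_<(I)$ is componentwise linear and moreover $\beta_{i,i+j}(\init_<(I))=\beta_{i,i+j}(I)$ for all $i,j$. For the reverse direction, assume $\init_<(I)$ is componentwise linear; since we also have $\beta_0(I)=\beta_0(\init_<(I))$, \cite[Theorem 5.4]{CaV} applies verbatim and yields that $I$ is componentwise linear. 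Combining the two directions gives the stated equivalence.

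Finally, for the ``in particular'' clause: if one of $I$, $\init_<(I)$ is componentwise linear, then by the equivalence just proved both are, and in particular $I$ is componentwise linear, so the moreover part of Theorem~\ref{t1} applies and gives $\beta_{i,i+j}(\init_<(I))=\beta_{i,i+j}(I)$ for all $i,j$. This closes the argument.

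Honestly, there is no real obstacle here: all the substance is already carried by Theorem~\ref{t1}, by \cite[Theorem 5.4]{CaV}, and by the elementary Gr\"obner-basis observation about $\beta_0$. The only point requiring a line of care is the reduction step, i.e.\ checking that $\beta_0(I)=\beta_0(\init_<(I))$ genuinely implies $\init_<(I_{<d>})=\init_<(I)_{<d>}$ for \emph{every} $d$ and not merely for a single degree; this is exactly the content extracted from the first part of the proof of \cite[Theorem 5.4]{CaV} and reconciled with Lemma~\ref{l2}, so once that is stated cleanly the corollary follows immediately.
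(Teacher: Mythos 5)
Your proposal is correct and follows essentially the same route as the paper: the paper likewise observes that the first part of the proof of \cite[Theorem 5.4]{CaV} yields $\init_<(I_{<d>})=\init_<(I)_{<d>}$ for all $d$ from the hypothesis $\beta_0(I)=\beta_0(\init_<(I))$, and then deduces the corollary from Theorem~\ref{t1} (forward direction and Betti number equality) together with \cite[Theorem 5.4]{CaV} (reverse direction). The only quibble is your phrasing that the degree-$d$ part of the Gr\"obner basis ``generates $I_{<d>}$'' --- $I_{<d>}$ is generated by the full degree-$d$ component of $I$ --- but the relevant conclusion, that $\init_<(I_{<d>})$ is generated in degree $d$, is exactly the fact the paper extracts from \cite{CaV}, so the argument stands.
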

\begin{proof}
It follows by Theorem\til\ref{t1} and \cite[Theorem 5.4]{CaV}.
\end{proof}

In what follows, we suppose furthermore that   $w=(w_1, \dots, w_n)\in\N^n$ is a weight vector and $N$ is a \fg\ $R[t]$-module such that it is a graded $K[t]$-module and it is flat over $K[t]$. We denote by $\hom_w(J)$ the $w$-homogenization of  an ideal $J\se R$ with respect to $w$.\\

Let us recall that $S=R[t]/\hom_w(I)$ is  $N$-fiber full up to  an integer $h$  as  an $R[t]$-module if,  for any $m\in\Z_{+}$, the natural projection $S/t^mS\la S/tS$ induces injective maps $\Ext^i_{R[t]}(S/tS, N) \la \Ext^i_{R[t]}(S/t^mS, N)$ for all $i\le h$ (see \cite[Definition 1.1]{Yu}). One result related to this notion is the following (see \cite[Corollary 3.2]{Yu}): 
\begin{quote}
If $R[t]/\hom_w(I)$ is  $N$-fiber full up to  $h$  as  an $R[t]$-module, then $$\dim_K(\Ext^i_R(R/I,N/tN)_j) = \dim_K(\Ext^i_R(R/\init_w(I),N/tN)_j)$$ for all $i\le h-2$ and for all $j\in\Z$.
\end{quote}
Actually the converse of the above result also holds and we have the following one:

\begin{remark}\label{nff}
Let $J\se R$ be an ideal.
Then
\begin{itemize}
\item[$i$)]
$S=R[t]/\hom_w(J)$ is $N$-fiber-full  up to $h$ as an $R[t]$-module
if and only if $\Ext^i_{R[t]}(S,N)$ is a flat $K[t]$-module for $i\le h-1$. 
\item[$ii$)]
If furthermore $J$ is homogeneous, then
$S$ is $N$-fiber-full  up to $h$ as an $R[t]$-module if and only if
$$\dim_K(\Ext^i_R(R/J,N/tN)_j)=\dim_K(\Ext_R^i(R/\init_w(J), N/tN)_j)$$
for all $i\le h-2$ and for all $j\in\Z$. \\
In particular, 
\begin{itemize}
\item[$iii$)]
if $N=R[t]$, then $S$ is $R[t]$-fiber-full  up to $h$ as an $R[t]$-module if and only if
$$\dim_K(\H_\mm^i(R/J)_j)=\dim_K(\H_\mm^i(R/\init_w(J))_j)$$
for all $i\ge n-h+2$ and for all $j\in\Z$.
\item[$iv$)]
If $N=K[t]$, then $S$ is $K[t]$-fiber-full  up to $h$ as an $R[t]$-module if and only if
$$\beta_{i,j}(R/J)=\beta_{i,j}(R/\init_w(J))$$
for all $i\le h-2$ and for all $j\in\Z$.
\end{itemize}
\end{itemize}
\end{remark}
\begin{proof}
The part $i$) is a direct consequence of \cite[Theorem 2.6]{Yu}.
For the part $ii$) we only have to notice that the converse  of some steps of  the proof of \cite[Corollary 3.2]{Yu} are also true.  More precisely, for the same reason discussed in \cite[Corollary 3.2]{Yu}   we have that for each $i,j\in\Z$, 
$$\bigoplus_{l\in\Z}\Ext^i_{R[t]}(S,N)_{(j, l)}\iso K[t]^{a_{i,j}}\oplus(\bigoplus_{k\in\Z_+}(K[t]/(t^k))^{b_{i,j,k}})$$
for some natural numbers $a_{i,j}$ and $b_{i,j,k}$. 
And for every $i,j\in\Z$,
$$\dim_K(\Ext^i_R(R/J,N/tN)_j)=a_{i,j},$$
$$\dim_K(\Ext_R^i(R/\init_w(J), N/tN)_j)=a_{i,j}+b_{i,j}+b_{i+1,j},$$
where $b_{i,j}=\sum_{k\in\Z_+}b_{i,j,k}$. 
Therefore, $S$ is $N$-fiber-full up to $h$ if and only if $\Ext^i_{R[t]}(S, N)$ is a flat $K[t]$-module for all $i\le h-1$, if and only if $b_{i,j, k}=0$ for all $i\le h-1$ and for all $j,k$, if and only if $b_{i,j}=b_{i+1,j}=0$ for all $i\le h-2$ and for all $j$, if and only if $\dim_K(\Ext^i_R(R/J,N/tN)_j)=a_{i,j}= \dim_K(\Ext_R^i(R/\init_w(J), N/tN)_j)$  for all $i\le h-2$ and for all j.\\
In particular,  if $N=R[t]$, then $N/tN\iso R$ and the part $iii$) is obtained by using the local duality theorem for graded modules (see \cite[Theorem 3.6.19]{CMrings}).\\
If $N=K[t]$, then $N/tN\iso K$. By \cite[Proposition 1.3.1]{CMrings}  we have $\dim_K(\Ext^i_R(R/J,K)_j)=\beta_{i,j}(R/J)$ and $ \dim_K(\Ext_R^i(R/\init_w(J), K)_j)=\beta_{i,j}(R/\init_w(J))$ for all $i,j$.
\end{proof}

Now considering the fact (see \cite[Proposition 3.4]{levico})
\begin{quote}
Given an ideal $J\se R$ and given a monomial order $<$ on $R$,  there exists a suitable weight vector $w$ such that $\init_w(J) =\init_<(J)$.
\end{quote}
we show that the above remark and Corollary\til\ref{main} imply the following result:

\begin{proposition}
Assume that one of the following two conditions holds
\begin{enumerate}
\item[$i$)]  $I$ is a componentwise linear ideal and $\init_<(I)$ is square-free,
\item[$ii$)]  $\init_<(I)$ is a componentwise linear ideal.
\end{enumerate}
Then $S=R[t]/\hom_w(I)$ is $K[t]$-fiber-full  up to $3$ if and only if $S$ is $K[t]$-fiber-full  up to $h$ for all $h\in\Z$, where $w$ is a weight vector such that $\init_w(I) =\init_<(I)$.
\end{proposition}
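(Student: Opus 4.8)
The plan is to translate everything into the language of Betti numbers via Remark~\ref{nff}~$iv$), and then invoke Corollary~\ref{main}. Recall that $S=R[t]/\hom_w(I)$ is $K[t]$-fiber-full up to $h$ if and only if $\beta_{i,j}(R/I)=\beta_{i,j}(R/\init_w(I))=\beta_{i,j}(R/\init_<(I))$ for all $i\le h-2$ and all $j\in\Z$. One direction is trivial: if $S$ is $K[t]$-fiber-full up to $h$ for all $h$, then in particular it is so for $h=3$. So the content is in the converse.

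\textbf{The main argument.} Assume $S$ is $K[t]$-fiber-full up to $3$, which by Remark~\ref{nff}~$iv$) means precisely that $\beta_{1,j}(R/I)=\beta_{1,j}(R/\init_<(I))$ for all $j$. Since $\beta_{1,j}(R/I)=\beta_{0,j}(I)$ and likewise for $\init_<(I)$, this says $\beta_0(I)=\beta_0(\init_<(I))$ (and even that the graded first Betti numbers agree). Now under hypothesis $i$) we have directly that $I$ is componentwise linear and $\init_<(I)$ is square-free; under hypothesis $ii$) we are given that $\init_<(I)$ is componentwise linear, hence square-free (a componentwise linear monomial ideal generated by square-free monomials\,---\,wait, this needs care), so let me instead argue: under $ii$), since $\init_<(I)$ is componentwise linear and $\beta_0(I)=\beta_0(\init_<(I))$, the theorem of Caviglia and Varbaro \cite[Theorem 5.4]{CaV} gives that $I$ is componentwise linear as well. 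In either case we are in the hypotheses of Corollary~\ref{main}: $\init_<(I)$ is componentwise linear (being equal to $\init_<(I)$\dots) \,---\, more precisely, in case $i$) Corollary~\ref{main} applies since $\init_<(I)$ is square-free and $\beta_0(I)=\beta_0(\init_<(I))$, giving that $\init_<(I)$ is componentwise linear and $\beta_{i,i+j}(\init_<(I))=\beta_{i,i+j}(I)$ for all $i,j$; in case $ii$) Corollary~\ref{main} again yields $\beta_{i,i+j}(\init_<(I))=\beta_{i,i+j}(I)$ for all $i,j$, since $\init_<(I)$ being componentwise linear forces it to be square-free — here one does need that a componentwise linear ideal with $\init_<(I)$ already a monomial ideal\dots. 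Regardless of how the square-freeness in case $ii$) is handled, the conclusion we extract is: $\beta_{i,j}(R/I)=\beta_{i,j}(R/\init_<(I))$ for all $i$ and all $j$. Applying Remark~\ref{nff}~$iv$) in the reverse direction, this means $S$ is $K[t]$-fiber-full up to $h$ for every $h\in\Z$, as desired.

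\textbf{Expected obstacle.} The one delicate point is the square-freeness of $\init_<(I)$ in case $ii$): Corollary~\ref{main} is stated with ``$\init_<(I)$ is square-free'' as a hypothesis, and in case $ii$) we are only told $\init_<(I)$ is componentwise linear. If square-freeness is not automatic, the cleanest fix is to observe that in case $ii$) we do not actually need Corollary~\ref{main}: by \cite[Theorem 5.4]{CaV} we already get $I$ componentwise linear, and then for each $d$ both $I_{<d>}$ and $\init_<(I)_{<d>}=\init_<(I_{<d>})$ have $d$-linear resolutions (using, as in the proof of Theorem~\ref{t1}, that $\beta_0(I)=\beta_0(\init_<(I))$ forces $\init_<(I_{<d>})=\init_<(I)_{<d>}$ for all $d$), so comparing Hilbert series of $I_{<d>}$ and $\mathfrak m I_{<d>}$ with their initial ideals and invoking \cite[Proposition 1.3]{HH} exactly as at the end of the proof of Theorem~\ref{t1} gives $\beta_{i,i+j}(\init_<(I))=\beta_{i,i+j}(I)$ for all $i,j$ without ever invoking square-freeness. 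With that subtlety resolved, the proposition is a formal consequence of Remark~\ref{nff} and the equality of all graded Betti numbers.
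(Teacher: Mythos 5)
Your proof is correct and follows essentially the same route as the paper: use Remark~\ref{nff}~$iv$) to translate fiber-fullness up to $3$ into $\beta_0(I)=\beta_0(\init_<(I))$, then obtain equality of all graded Betti numbers from Corollary~\ref{main} together with \cite[Theorem 5.4]{CaV}, and translate back via Remark~\ref{nff}~$iv$). The subtlety you flag in case $ii$) --- that Corollary~\ref{main} formally assumes $\init_<(I)$ square-free, which is not granted there --- is genuine, and your fix (use \cite[Theorem 5.4]{CaV} to get $I$ componentwise linear, then run the Hilbert-series and \cite[Proposition 1.3]{HH} computation from the end of the proof of Theorem~\ref{t1}, which never uses square-freeness) is exactly what is needed to make the paper's terse citation of ``Corollary~\ref{main} and \cite[Theorem 5.4]{CaV}'' fully rigorous.
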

\begin{proof}
One implication is trivial. On the other hand, if $S$ is $K[t]$-fiber-full  up to $3$, then $\beta_{1,j}(R/I)=\beta_{1,j}(\init_<(I))$ for all $j\in\Z$ by Remark\til\ref{nff} $iv)$, and it follows that $\beta_{0}(I)=\beta_{0}(\init_<(I))$. If one of the two conditions of our assumption holds,  we obtain that $\beta_{i,i+j}(I)=\beta_{i,i+j}(\init_<(I))$ for all $i,j$ by Corollary\til\ref{main} and by \cite[Theorem 5.4]{CaV}. This implies that $S$ is $K[t]$-fiber-full  up to $h$ for all $h\in\Z$ by using again Remark\til\ref{nff} $iv)$.
\end{proof}

\end{document}